\documentclass[preprint,12pt]{elsarticle}

\usepackage{amssymb}
\usepackage{latexsym,amssymb}
\usepackage{enumerate, verbatim}
\usepackage{graphics}
\usepackage[latin1]{inputenc}
\usepackage[T1]{fontenc}
\usepackage{amsmath, amsfonts, amsthm, amscd}

\newtheorem{theorem}{Theorem}[section]

\newtheorem{prop}[theorem]{Proposition}

\newtheorem{corollary}[theorem]{Corollary}
\newtheorem{remark}[theorem]{Remark}
\newtheorem{defi}[theorem]{Definition}

\newcommand{\CC} {\ensuremath{\mathcal{C}}}
\newcommand{\DD} {\ensuremath{\mathcal{D}}}

\newcommand{\C} {\ensuremath{\mathbb{C}}}
\newcommand{\N} {\ensuremath{\mathbb{N}}}
\newcommand{\Z} {\ensuremath{\mathbb{Z}}}
\newcommand{\F} {\ensuremath{\mathbb{F}}}
\newcommand{\wt} {\textnormal{\textrm{wt}}}
\newcommand{\dd} {\textnormal{\textrm{d}}}
\newcommand{\ddd} {\textnormal{\textrm{dd}}}
\newcommand{\Aut} {\textnormal{\textrm{Aut}}}

\newcommand{\SSS} {\textnormal{\textrm{S}}}

\journal{Finite Fields and their Applications}

\begin{document}

\begin{frontmatter}

\title{On involutions in extremal self-dual codes\\ and the dual distance of semi self-dual codes.}

\author{Martino~Borello}

\address{Member
INdAM-GNSAGA (Italy)\\
Dipartimento di Matematica e
Applicazioni\\ Universit\`{a} degli Studi di Milano Bicocca\\
20125 Milan, Italy\\ e-mail: martino.borello@unimib.it}

\author{Gabriele~Nebe}

\address{Lehrstuhl D f\"ur Mathematik,\\ RWTH Aachen University,\\ 52056 Aachen, Germany,\\ e-mail: nebe@math.rwth-aachen.de.
}

\begin{abstract}
A classical result of Conway and Pless is that a natural projection
of the fixed code of an automorphism of odd prime order of a
self-dual binary linear code is self-dual \cite{ConwayPless}. In
this paper we prove that the same holds for involutions under some
(quite strong)
conditions on the codes.\\
In order to prove it, we introduce a new family of binary codes: the
semi self-dual codes. A binary self-orthogonal code is called semi
self-dual if it contains the all-ones vector and is of codimension 2
in its dual code. We prove upper bounds on the dual distance of semi
self-dual codes.\\
 As an application we get the
following: let $\CC$ be an extremal self-dual binary linear code of
length $24m$ and $\sigma \in \Aut(\CC )$ be a fixed point free
automorphism of order 2. If $m$ is odd or if $m=2k$ with
$\binom{5k-1}{k-1}$ odd then $\CC$ is a free $\F_2\langle \sigma
\rangle $-module. This result has quite strong consequences on the
structure of the automorphism group of such codes.
\end{abstract}

\begin{keyword}
semi self-dual codes \sep bounds on minimum distance \sep
automorphism group \sep free modules \sep extremal codes
\end{keyword}

\end{frontmatter}

\section{Introduction}

The research in this paper is motivated by the study of involutions
of extremal self-dual codes, which plays a fundamental role in
\cite{Nebefree,Baut6,BorelloWillems,BDN,BE8,YY}.

Let $m\in \N$ and  $\CC = \CC ^{\perp} \leq \F_2^{24m}$ be an
extremal binary self-dual code, so $d(\CC) = 4m+4$ \cite{MS}. Then
$\CC $ is doubly even \cite{Rshad}. There are unique extremal
self-dual codes of length 24 and  48 and these are the only known
extremal codes of length $24m$. It is an intensively studied open
question raised in \cite{Sloane}, whether an extremal code of length
72 exists. A series of many papers has shown that if such a code
exists, then its automorphism group $\Aut(\CC ) = \{ \sigma \in
S_{24m} \mid \sigma (\CC ) = \CC \}$ has order $\leq 5$ (see
\cite{Thesis} for an exposition of this result).
 Stefka Bouyuklieva
\cite{Bouyuklieva} studies automorphisms of order 2 of such codes.
She shows that if $\CC $ is an extremal code of length $24m$, $m\geq
2$ and $\sigma \in \Aut(\CC ) $ has order 2, then the permutation
$\sigma $ has no fixed points, with one exception, $m=5$, where
there might be 24 fixed points. If $\sigma = (1,2)\ldots ,
(24m-1,24m)$ is a fixed point free automorphism of a doubly even
self dual code $\CC $, then its \textit{fixed code}
$$\CC (\sigma )
:= \{ c\in \CC \mid \sigma (c) = c \} $$ is isomorphic to
$$\pi (\CC (\sigma ) )  = \{ (c_1,\ldots , c_{12m}) \in \F _2^{12m}  \mid
(c_1,c_1,c_2,c_2,\ldots , c_{12m},c_{12m} ) \in \CC \} $$ such that
$$\pi (\{ c+\sigma (c) \mid c\in \CC \} ) = \pi (\CC (\sigma )) ^{\perp }
\subseteq \pi (\CC (\sigma )) .$$ As $\CC $ is doubly-even, all
words in  $\pi (\CC (\sigma ))$ have even weight. It is shown in
\cite{Nebefree} and \cite{BorelloWillems} that the code $\CC $ is a
free $\F_2\langle \sigma \rangle $-module, if and only if $\pi (\CC
(\sigma ) ) $ is self-dual. If $\pi (\CC (\sigma )) $ is not
self-dual then it contains the dual $\DD ^{\perp }$ of some code
$\mathcal{D}$ of length $12m$ with
$${\bf 1} :=
(1,\ldots , 1) \in \pi(\CC(\sigma))^{\perp} \subseteq \mathcal{D}
\subseteq \mathcal{D}^{\perp } \subseteq \pi(\CC(\sigma)).$$
In particular $\dd (\DD ^{\perp }) \geq \dd (\pi (\CC (\sigma )))
= \frac{1}{2} \dd ( \CC (\sigma )) \geq \frac{1}{2} \dd (\CC )$.

\begin{defi}
A binary self-orthogonal code $\DD\subseteq \DD^{\perp}\leq \F_2^n$
of length $n$ is called \textit{semi self-dual}, if ${\bf 1} :=
(1,\ldots , 1)  \in \DD $ and
 $\dim (\DD^{\perp}/\DD ) = 2$.
\end{defi}

Self-orthogonal codes always consist of words of even weight, so
$\wt(c):=|\{i \ | \ c_i=1\}| \in 2\Z $ for all $c\in \DD $. Hence
already the condition that ${\bf 1} \in \DD $ implies that the
length $n=12m$ of $\DD $ is even. Note that $\DD ^{\perp } \subseteq
{\bf 1}^{\perp}  = \{ c\in \F_2^n \mid \wt (c) \in 2\Z \}$ implies
that also $\DD ^{\perp} $ consists of even weight vectors.
The \textit{dual distance} of $\DD $ is the minimum weight of the
dual code $\ddd(\DD) := \dd (\DD ^{\perp }) :=\min(\wt(\DD^{\perp}
\setminus \{0\}))$.

In this paper we will bound the dual distance $\ddd(\mathcal{D}) =
\dd(\mathcal{D}^{\perp })$ of semi self-dual codes. In particular if
the length of $\mathcal{D}$ is $12 m$ with either $m$ odd or $m=2\mu
$ such that $\binom{5\mu -1}{\mu -1}$ is odd, then
$\ddd(\mathcal{D}) \leq 2m $ (see Theorem \ref{main} below for the
general statement).

Then we may conclude the following Theorem.

\begin{theorem}\label{extodd}
Let $\CC =\CC ^{\perp } \leq \F_2^{24m}$ be an extremal code of
length $24m$ and $\sigma \in \Aut(\CC )$ be a fixed point free
automorphism of order 2. Then $\CC $ is a free $\F_2\langle \sigma
\rangle $-module if $m$ is odd or if $m=2\mu$ with
$\binom{5\mu-1}{\mu-1}$ odd.
\end{theorem}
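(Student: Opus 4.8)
The plan is a proof by contradiction, using the module-theoretic dictionary recalled in the introduction together with the dual distance bound for semi self-dual codes stated in Theorem~\ref{main}.

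Suppose $\CC$ is \emph{not} a free $\F_2\langle\sigma\rangle$-module. By the characterisation of \cite{Nebefree, BorelloWillems}, $E:=\pi(\CC(\sigma))$ is then not self-dual, i.e.\ $E^{\perp}\subsetneq E$. As $\CC$ is doubly even, every word of $E$ has even weight, so the standard bilinear form restricts to a non-degenerate \emph{alternating} form on $E/E^{\perp}$; in particular $\dim(E/E^{\perp})=2k$ is even and positive. Pulling back a totally isotropic subspace of dimension $k-1$ of $E/E^{\perp}$ (for $k=1$ one simply takes $E^{\perp}$ itself) produces a code $\DD$ with
$$\mathbf{1}\in E^{\perp}\subseteq\DD\subseteq\DD^{\perp}\subseteq E,\qquad \dim(\DD^{\perp}/\DD)=2,$$
that is, a semi self-dual code of length $12m$; this is precisely the configuration indicated in the introduction after the discussion of free modules.

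Now I bound the dual distance of $\DD$ from below. Since $\CC$ is extremal, $\dd(\CC)=4m+4$, and $\CC(\sigma)\subseteq\CC$ forces $\dd(\CC(\sigma))\ge 4m+4$. Because $\sigma$ is fixed point free, every $\sigma$-invariant word is of the form $(c_1,c_1,\dots,c_{12m},c_{12m})$, so $\pi$ halves weights on $\CC(\sigma)$ and $\dd(E)=\tfrac12\dd(\CC(\sigma))\ge 2m+2$. As $\DD^{\perp}\subseteq E$, each nonzero word of $\DD^{\perp}$ is a nonzero word of $E$, hence
$$\ddd(\DD)=\dd(\DD^{\perp})\ge \dd(E)\ge 2m+2.$$
But $\DD$ is a semi self-dual code of length $12m$, so Theorem~\ref{main} applies: when $m$ is odd, or $m=2\mu$ with $\binom{5\mu-1}{\mu-1}$ odd, it yields $\ddd(\DD)\le 2m$, contradicting $\ddd(\DD)\ge 2m+2$. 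Therefore $\CC$ must be a free $\F_2\langle\sigma\rangle$-module.

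The real work is carried entirely by Theorem~\ref{main}; the deduction above is just the dictionary between the involution $\sigma$ and the code $\DD$, and the one step deserving attention is checking that $E/E^{\perp}$ genuinely carries a totally isotropic subspace of the right dimension, so that $\DD$ can indeed be chosen of codimension exactly $2$ in $\DD^{\perp}$ while still containing $\mathbf 1$ and satisfying $\DD^{\perp}\subseteq E$.
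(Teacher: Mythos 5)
Your proof is correct and follows exactly the route the paper intends: the free-module criterion of \cite{Nebefree,BorelloWillems}, the construction of a semi self-dual code $\DD$ of length $12m$ between $\pi(\CC(\sigma))^{\perp}$ and $\pi(\CC(\sigma))$, the weight-halving lower bound $\ddd(\DD)\ge \frac12\dd(\CC)=2m+2$, and the upper bound $\ddd(\DD)\le 2m$ from Theorem~\ref{main} (the paper leaves this deduction implicit in the introduction rather than writing it as a formal proof). Your explicit justification, via the non-degenerate alternating form on $E/E^{\perp}$, that $\DD$ can be chosen with $\dim(\DD^{\perp}/\DD)=2$ is a correct filling-in of a step the paper only asserts.
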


In particular, for $m=3$, we obtain \cite[Theorem 3.1]{Nebefree}
without appealing to the classification of all extremal codes of
length 36 in \cite{Gaborit} and without any serious
computer calculation.

\begin{remark} In {\rm \cite{Zhang}}, Zhang proved that extremal self-dual binary
linear codes of length a multiple of $24$ may exist only up to
length $3672=153\cdot 24$. About $72\%$ of these lengths are covered
by Theorem \ref{extodd}. In particular the projections of fixed
codes by fixed point free involutions in self-dual $[96,48,20]$ and
$[120,60,24]$ codes {\rm(}see {\rm \cite{BWY,BDW}} for an exposition
of the state of the art for the codes with these parameters{\rm)}
are self-dual.
\end{remark}

The same arguments as in \cite{Nebefree} can now be applied to
obtain the following quite strong consequence on the structure of
the automorphism group of such extremal codes.

\begin{corollary}
Let $m \geq 3$ be odd and assume that $m\neq 5$.
 Let \mbox{$\CC=\CC^{\perp} \leq \F_2^{24m}$} be an extremal code.
If $8$ divides $| \Aut(\CC) | $ then a Sylow $2$-subgroup of
$\Aut(\CC)$ is isomorphic to $C_2\times C_2\times C_2$, $C_2\times
C_4$ or $D_8$.
\end{corollary}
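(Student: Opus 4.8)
The plan is to feed Theorem~\ref{extodd} into the kind of $2$-group argument carried out in \cite{Nebefree}. Write $\Omega=\{1,\dots,24m\}$ and let $P$ be a Sylow $2$-subgroup of $\Aut(\CC)$, so that $|P|$ is a power of $2$, divisible by $8$ by hypothesis. Since $m\ge 2$ and $m\ne 5$, the theorem of Bouyuklieva recalled in the introduction \cite{Bouyuklieva} tells us that every element of order $2$ in $\Aut(\CC)$ acts on $\Omega$ without fixed points; in particular every involution of $P$ is fixed point free. Hence for each $\omega\in\Omega$ the stabiliser $P_\omega$ is a $2$-group without involutions, so $P_\omega=1$, and $P$ acts freely on $\Omega$. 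Therefore $|P|$ divides $|\Omega|=24m=8\cdot 3m$, and since $3m$ is odd we get $|P|=8$.

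Of the five groups of order $8$, exactly $C_8$ and $Q_8$ have a unique involution, and these two are also precisely the ones with no subgroup isomorphic to $C_2\times C_2$; the other three are $C_2^3$, $C_2\times C_4$ and $D_8$. So it is enough to rule out the case that $P$ has a unique involution $\sigma$. I would assume this is so: by Bouyuklieva $\sigma$ is fixed point free, and since $m$ is odd Theorem~\ref{extodd} shows that $\CC$ is a free $\F_2\langle\sigma\rangle$-module, i.e., its restriction to $\langle\sigma\rangle$ is free.

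The heart of the argument is to promote this to freeness over all of $\F_2 P$. Since $P$ has a unique involution, its only non-trivial elementary abelian subgroup is $\langle\sigma\rangle$; hence by Chouinard's theorem $\CC$, being projective on restriction to every elementary abelian subgroup (trivially on the trivial one, and because ``free'' means ``projective'' over $\F_2\langle\sigma\rangle$), is projective over $\F_2 P$, and as $\F_2 P$ is a local ring it is in fact free. (When $P\cong C_8$ one can avoid Chouinard's theorem: restriction along the chain $C_2<C_4<C_8$ carries each indecomposable $\F_2 C_{2^k}$-module to a module that is free only in the top case, so freeness over $\langle\sigma\rangle$ climbs all the way up to $P$.) But then $8=|P|$ divides $\dim_{\F_2}\CC=\tfrac{1}{2}\cdot 24m=12m$, forcing $2\mid 3m$ and contradicting that $m$ is odd. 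Hence $P$ has at least two involutions, and therefore $P\cong C_2^3$, $C_2\times C_4$ or $D_8$.

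The only genuinely non-formal ingredient, beyond Theorem~\ref{extodd} itself, is this promotion of freeness over $\langle\sigma\rangle$ to freeness over $\F_2 P$; everything surrounding it is elementary structure theory of groups of order $8$ together with a parity count. The promotion is cheap precisely because $C_8$ and $Q_8$ are too small to contain a Klein four group, so $\langle\sigma\rangle$ is the only elementary abelian subgroup on which projectivity has to be verified.
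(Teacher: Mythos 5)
Your proof is correct and follows essentially the same route as the paper's: Bouyuklieva's fixed-point-freeness forces the Sylow $2$-subgroup to act freely and hence have order $8$, Theorem~\ref{extodd} plus Chouinard's theorem (applied to the unique elementary abelian subgroup of $C_8$ and $Q_8$) promotes freeness over $\F_2\langle\sigma\rangle$ to freeness over the whole group algebra, and the dimension count $12m/8\notin\N$ gives the contradiction. The only additions are cosmetic (the local-ring remark and the alternative restriction argument for $C_8$), so nothing further is needed.
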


\begin{proof}
Let $S$ be a Sylow-2-subgroup of $\Aut(\CC)$.\\
By our assumption and \cite{Bouyuklieva}
 all elements of order $2$ in $\Aut(\CC)$ act
without fixed points on the places $\{ 1,\ldots , 24m\}$. This
immediately implies that all $S$-orbits have length $|S|$, so
$|S|$ divides $24m$ and hence $|S| = 8$.\\
So we only need to exclude $S=C_8$ and $S=Q_8$. This is done by
considering the module structure of $\CC$ as an $\F_2S$-module. Note
that both groups have a unique elementary abelian subgroup, say $Z$,
and $Z\cong C_2$. By Theorem \ref{extodd} the module $\CC$ is a free
$\F_2Z$-module. Chouinard's Theorem \cite{Chouinard} states that a
module is projective if and only if its restriction to every
elementary abelian subgroup is projective. Then $\CC$ is also a free
$\F_2S$-module of rank
$${\rm rk}_{\F_2S} (\CC) = \frac{\dim_{\F_2}(\CC)}{|S|} = \frac{12m}{8} =  3 \cdot \frac{m}{2} \not\in \N $$
a contradiction.
\end{proof}

\begin{remark} Note that the cyclic group $C_8$ is already excluded by the
Sloane-Thompson Theorem {\rm(}see also {\rm \cite{Annika})} because
$S\cong C_8 $ acting fixed point freely on $24m$ points implies that
$S$ is not in the alternating group, so $S$ does not fix any
doubly-even self-dual code.
\end{remark}

\section{Bounds on the dual distance of semi self-dual codes}

In the previous section we introduced the definition of semi
self-dual codes. Now we will prove upper bounds on their dual
distance. Even if this family of codes was introduced as a tool for
the proof of Theorem \ref{extodd}, it seems to be interesting also
by itself.
Applying the methods from \cite{Rshad}, we show the following
theorem.

\begin{theorem}\label{main}
Let $\DD \leq \F_2^n$ be a semi self-dual code. Then the dual
distance of $\DD $ is bounded by
$$\ddd(\DD ) = \dd (\DD ^{\perp} ) \leq \left\{ \begin{array}{ll}
4 \lfloor \frac{n}{24} \rfloor +2 & \mbox{ if } n\equiv 0,2,4,6,8,10,12,14 \pmod{24} \\
4 \lfloor \frac{n}{24} \rfloor +4 & \mbox{ if } n\equiv 16,18,20 \pmod{24}  \\
4 \lfloor \frac{n}{24} \rfloor +6 & \mbox{ if } n\equiv 22
\pmod{24}.
\end{array} \right. $$
If $n=24\mu$ for some integer $\mu$ and
 $\DD$ is doubly-even or
$\binom{5\mu-1}{\mu-1}$ is odd then
$$\ddd(\DD )=\dd({\mathcal D}^{\perp}) \leq 4\mu .$$
\end{theorem}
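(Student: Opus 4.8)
The plan is to reduce everything to the theory of self-dual codes and their shadows and then to run a Mallows--Sloane--Rains style weight-enumerator computation, as in \cite{Rshad}.

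\emph{Structure of $\DD^\perp$.} The defining conditions of a semi self-dual code force $\dim_{\F_2}\DD=\tfrac n2-1$, so $\DD^\perp/\DD$ is a $2$-dimensional $\F_2$-space; since $\mathbf 1\in\DD$ all words of $\DD^\perp$ have even weight, so the inner product induces a nondegenerate \emph{alternating} form on $\DD^\perp/\DD$. Every line of $\DD^\perp/\DD$ is isotropic, so each of the three codes $E_1,E_2,E_3$ lying strictly between $\DD$ and $\DD^\perp$ is self-orthogonal of dimension $\tfrac n2$, hence self-dual, and inclusion--exclusion (all pairwise, and the triple, intersections equal $\DD$) gives $\DD^\perp=E_1\cup E_2\cup E_3$. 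Thus $\ddd(\DD)=\dd(\DD^\perp)=\min_i\dd(E_i)$, and, writing $F_i:=E_i\setminus\DD$ for the three nontrivial cosets, $\DD^\perp=\DD\cup F_1\cup F_2\cup F_3$.

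\emph{The bound $4\mu+2$.} Now let $n=24\mu$. At least one of the $E_i$ is singly-even (Type~I): if $\DD$ is not doubly-even this is automatic since $\DD\subseteq E_i$; if $\DD$ is doubly-even, then $q(v+\DD):=\tfrac12\wt(v)\bmod 2$ is a well-defined quadratic form on $\DD^\perp/\DD$ whose polarisation is the (nondegenerate) inner-product form, so $q$ cannot vanish identically, i.e. not all $E_i$ can be doubly-even. By the shadow bound \cite{Rshad}, a self-dual code of length $24\mu$ with minimum distance $\ge 4\mu+4$ is doubly-even; hence any singly-even self-dual code of that length has minimum distance $\le 4\mu+2$, and therefore $\ddd(\DD)\le 4\mu+2$. (The non-multiple-of-$24$ cases of Theorem~\ref{main} follow the same pattern, using the shadow bounds for the appropriate residue of $n$ modulo $24$.)

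\emph{Removing the $+2$.} Suppose, for contradiction, that $\ddd(\DD)\ge 4\mu+2$; by the previous step $\ddd(\DD)=\dd(\DD^\perp)=4\mu+2$. If $\DD$ is doubly-even the form $q$ is nondegenerate on a plane, hence hyperbolic --- it cannot be elliptic, for then all three $E_i$ would be singly-even with doubly-even subcode $\DD$ and shadow $\bigcup_{j\ne i}F_j$ consisting of words of weight $\equiv 2\pmod 4$, contradicting that the shadow of a self-dual code of length $24\mu$ has all weights $\equiv 12\mu\equiv 0\pmod 4$. So exactly two of the $E_i$, say $E_1,E_2$, are doubly-even; since $\dd(E_j)\ge\dd(\DD^\perp)=4\mu+2$ and $\dd(E_j)\equiv 0\pmod 4$ they are extremal, whence $W_{E_1}=W_{E_2}=\Psi_\mu$, the unique extremal Type~II weight enumerator of length $24\mu$; consequently $W_{F_1}=W_{F_2}=\Psi_\mu-W_\DD$, the shadow of the singly-even code $E_3$ is $F_1\cup F_2$, it has all coefficients even, and it has minimum weight $\ge 4\mu+4$. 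Feeding these data, the MacWilliams identities for $\DD\subseteq\DD^\perp$ and for the self-dual code $E_3$, and the shadow identity $W_{F_1}+W_{F_2}=2^{-12\mu}W_{E_3}\!\left(x+y,\ i(x-y)\right)$ (using that $\Psi_\mu$ is fixed by the shadow transform), into the Gleason basis of the relevant invariant ring pins down $W_{E_3}$ and leaves a contradiction: a shadow coefficient that must be an even nonnegative integer comes out otherwise. When $\DD$ is not doubly-even all three $E_i$ are singly-even with $\dd(E_i)=4\mu+2$ and shadows of minimum weight $\ge 4\mu+4$; the analogous bookkeeping again pins everything down, and the surviving coefficient evaluates to an integer multiple of $\binom{5\mu-1}{\mu-1}$ divided by a power of $2$ (the $5\mu-1$ being the number of admissible $y$-degrees $4\mu+4,4\mu+8,\dots,24\mu-4$ of $W_{F_1}$); its $2$-adic valuation produces the required contradiction exactly when $\binom{5\mu-1}{\mu-1}$ is odd. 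Hence under either hypothesis $\ddd(\DD)\le 4\mu$. The heart of the proof --- and the step I expect to be the main obstacle --- is precisely this last computation: setting up the invariant-ring / Gleason-basis description of the enumerators, tracking the congruences of the supports modulo $4$, and identifying the coefficient whose integrality, parity or nonnegativity fails; this is routine in principle (it is the method of \cite{Rshad}, now applied to the super-code $\DD^\perp$ of a self-dual code) but delicate, and it is where the residue of $n$ modulo $24$ and the binomial coefficient $\binom{5\mu-1}{\mu-1}$ enter the statement.
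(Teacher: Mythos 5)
Your structural setup---the three self-dual codes $E_1,E_2,E_3$ strictly between $\DD$ and $\DD^\perp$, with $\DD^\perp=E_1\cup E_2\cup E_3$ and hence $\ddd(\DD)=\min_i\dd(E_i)$---is exactly Remark~\ref{selfdual}, and your observation that for $n=24\mu$ at least one $E_i$ must be singly even, hence has minimum distance $\le 4\mu+2$ by the doubly-even clause of Rains' theorem, is a clean alternative derivation of the bound $4\mu+2$ \emph{in that residue class}. But there are two genuine gaps. First, the parenthetical claim that the residues $n\equiv 2,4,\dots,14\pmod{24}$ ``follow the same pattern'' is not justified and, as far as \cite{Rshad} goes, not true: for those lengths Rains' bound for a singly even self-dual code is still $4\lfloor n/24\rfloor+4$ (the forced doubly-evenness at the extremal value is special to $n\equiv 0\pmod{24}$), so knowing that some $E_i$ is singly even yields only $4\mu+4$. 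The paper obtains $4\mu+2$ for $n\equiv 0,2,\dots,14\pmod{24}$ not from an individual $E_i$ but from the difference polynomial $B=W_{\DD}-\tfrac12 W_{\DD^\perp}$, which lies in the anti-invariant module $(x^4-6x^2y^2+y^4)\cdot\C[x^2+y^2,\,x^2y^2(x^2-y^2)^2]$; B\"urmann--Lagrange inversion shows that the coefficient $\alpha_{2\mu+1}$ would have to be simultaneously positive and nonpositive if $\dd(\DD^\perp)>4\mu+2$ (Proposition~\ref{nde}). This part of the statement cannot be recovered by your reduction.

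Second, and more seriously, the step ``removing the $+2$''---the actual content of the second half of the theorem---is only a plan, and you say yourself that the decisive computation is the main obstacle; as it stands the proof is incomplete precisely there. In the paper this step splits into two short arguments that do not match your sketch. For $\DD$ doubly even, one chooses a self-dual $\mathcal{F}$ with maximal doubly-even subcode $\mathcal{F}_0=\DD$, notes $\DD^\perp=\mathcal{F}\cup \SSS(\mathcal{F})$, and applies the Bachoc--Gaborit inequality $2\dd(\mathcal{F})+\dd(\SSS(\mathcal{F}))\le 4+\tfrac n2$ to get $\dd(\DD^\perp)\le\lfloor(8+n)/6\rfloor\le 4\mu+1$, hence $\le 4\mu$ by parity (Proposition~\ref{de}); no extremal Type~II enumerators and no case analysis on the quadratic form are needed, and it is far from clear that your proposed route (pinning down $W_{E_3}$ from the hyperbolic configuration) actually closes. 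For $\DD$ not doubly even, Proposition~\ref{last} evaluates one explicit coefficient, $\alpha_{2\mu}(12\mu)=\binom{5\mu}{\mu}+\binom{5\mu-1}{\mu-1}=6\binom{5\mu-1}{\mu-1}$, which under the assumption $\dd(\DD^\perp)\ge 4\mu+2$ must equal $4\epsilon_{2\mu}$ with $\epsilon_{2\mu}$ a nonnegative integer (integrality of the shadow-type coefficients of $F$, Corollary~\ref{corcor}), forcing $\binom{5\mu-1}{\mu-1}$ to be even. You correctly anticipate that the parity of this binomial coefficient enters through a $2$-adic integrality constraint, but your heuristic for the $5\mu-1$ (a count of admissible $y$-degrees) is not its actual source: it is the exponent in the expansion of $(1-Z)^{-4\mu-1}$. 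Without carrying out some such computation the theorem is not proved.
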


Theorem \ref{main} follows by combining Remark \ref{selfdual},
Proposition \ref{de}, Proposition \ref{nde} and Proposition
\ref{last}.

\begin{remark}
The well-known Kummer's theorem on binomial coefficients implies
that $\binom{5\mu-1}{\mu-1}$ is odd if and only if there are no
carries when $4\mu$ is added to $\mu-1$ in base $2$.
\end{remark}

By direct calculations with {\sc Magma}, using a database
\cite{database} of all self-dual binary linear codes of length up to
$40$, most of the bounds of Theorem \ref{main} can be shown to be
sharp. In particular, we have semi self-dual codes such that their
dual codes have parameters \ $[4,3,2]$, \ $[6,4,2]$, \ $[8,5,2]$, \
$[10,6,2]$, \ $[12,7,2]$, \ $[14,8,2]$, \ $[16,9,4]$, $[18,10,4]$,
$[20,11,4]$ and $[22,12,6]$ and a doubly-even semi self-dual code
with dual code of parameters $[24,13,4]$.\\

\section{Self-dual subcodes}

From now on let $\DD$ be a semi self-dual code of even length $n\geq
4$.  Furthermore, let $\mu=\left\lfloor\frac{n}{24}\right\rfloor$.

\begin{remark} \label{selfdual}
There are exactly three self-dual codes  $\CC _i = \CC_i^{\perp }$
{\rm(}$i\in\{1,2,3\}${\rm)} with
$$\DD \subset \CC_1,\CC_2,\CC_3 \subset \DD^{\perp}.$$
From the bound on $\dd(\CC _i)$ given in  \textnormal{\cite[Theorem
5]{Rshad}} we obtain
$$
\ddd(\DD ) = \dd (\DD ^{\perp} ) \leq \dd(\CC _1) \leq \left\{
\begin{array}{ll}
4 \mu +6 & \mbox{ if } n\equiv 22 \pmod{24}  \\
4 \mu +4 & \mbox{ otherwise.}
\end{array} \right. $$
\end{remark}

\noindent We aim to find a better bound.

\section{Shadows: the doubly-even case}

\begin{prop}\label{de}
If $\DD$ is doubly-even, then
$$\dd(\mathcal{D}^\perp)\leq \left\{\begin{array}{ll} 4 \mu & \text{if} \ n \equiv 0 \pmod{24} \\ 4\mu+2  & \text{if} \ n\equiv 4,8,12 \pmod{24} \\
4\mu+4 & \text{if} \ n\equiv 16,20 \pmod{24}. \end{array} \right.$$
\end{prop}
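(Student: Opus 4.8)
The plan is to adapt the shadow-theory argument of \cite{Rshad} to the semi self-dual situation. Recall that if $\DD$ is doubly-even and self-orthogonal, then $\DD^{\perp}$ is partitioned by cosets of $\DD$. Since $\DD$ is semi self-dual, $\dim(\DD^{\perp}/\DD)=2$, so there are exactly four cosets; by Remark \ref{selfdual} these organize into the self-dual codes $\CC_1,\CC_2,\CC_3$, each of which is obtained from $\DD$ by adjoining one of the three nonzero cosets. The key object is the \emph{shadow} $\SSS(\DD)$ of $\DD$, which here should be defined (following the classical case) as the union of those cosets of $\DD$ in $\DD^{\perp}$ consisting of vectors whose weight is $\not\equiv 0 \pmod 4$. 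Since $\mathbf 1\in\DD$ and $n$ is even, weights in each coset are well-defined mod $4$ up to the shift coming from $\mathbf 1$; a coset is either ``doubly-even'' or ``singly-even''. Exactly one of the three self-dual overcodes $\CC_i$, say $\CC_1$, can be chosen to be doubly-even when $n\equiv 0\pmod 8$, and the shadow is supported on the remaining cosets. The weight enumerator $W_{\DD}(x,y)$ determines, via the MacWilliams identity applied coset by coset, the weight enumerator of the shadow $W_{\SSS}(x,y)$, and the classical shadow identity $W_{\SSS}(x,y) = W_{\DD}\!\left(\frac{x+y}{\sqrt 2},\frac{x-y}{\sqrt 2}\right)$ type relation (with the appropriate scaling for a codimension-$2$ situation) continues to hold.

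Concretely, I would proceed as follows. First, set up the coset decomposition $\DD^{\perp} = \DD \cup (\DD+a) \cup (\DD+b) \cup (\DD+a+b)$ and record that $\DD^{\perp}$ has minimum weight $\ddd(\DD)$, which is attained in one of the three nontrivial cosets. Second, express the relevant weight enumerators in the Gleason-type basis: since $\DD$ is doubly-even and self-orthogonal of even length, and since we are free to assume $\ddd(\DD)$ is large (otherwise the bound is trivial), the weight enumerator of the self-dual code $\CC_1 = \DD\cup(\DD+a)$ lies in the ring generated by the weight enumerators of the $[8,4,4]$ Hamming code and the $[24,12,8]$ Golay code, invariant under the relevant Gleason group. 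Third, write the shadow weight enumerator $W_{\SSS(\DD)}$, which collects the singly-even cosets, in terms of the same data; its lowest-degree term is controlled by a coefficient that must be a nonnegative integer. Fourth, assuming for contradiction that $\ddd(\DD) \geq 4\mu+2$ (resp.\ $4\mu+4$, $4\mu+6$) forces the coefficient of $y^k$ in $W_{\DD^{\perp}}$ to vanish for all small $k$; translating these vanishing conditions through the shadow identity produces a negative coefficient in $W_{\SSS(\DD)}$, the desired contradiction. The case split on $n \bmod 24$ mirrors exactly the case split in \cite[Theorem 5]{Rshad}: the value of $n \bmod 24$ governs how many of the ``free'' coefficients in the Gleason-basis expansion one may kill before the shadow constraint bites.

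The main obstacle I anticipate is twofold. First, one must correctly formulate the shadow in the codimension-$2$ setting: unlike a self-dual code, which has a single nontrivial coset (its shadow), here there are three nontrivial cosets and one must identify which combination plays the role of the shadow and verify the analogue of the shadow weight-enumerator identity — in particular checking that the ``extra'' direction (the codimension-$2$ versus codimension-$1$ discrepancy) does not spoil the $\sqrt 2$-scaling and the mod-$4$ bookkeeping. The condition $\mathbf 1 \in \DD$ is what makes this work: it guarantees that all cosets have a well-defined weight class mod $4$ and that the all-ones vector does not ``leak'' into the shadow, so the shadow is genuinely a union of complete cosets. Second, the combinatorial extremal argument — showing that the hypothesized minimum distance forces enough coefficients to vanish that the shadow enumerator goes negative — requires carefully tracking the dimension of the space of candidate weight enumerators as a function of $n\bmod 24$; this is where the three-way split $4\mu, 4\mu+2, 4\mu+4$ (and the exceptional behaviour at $n\equiv 0\pmod{24}$, which the statement already notes is even better, tied to the parity of $\binom{5\mu-1}{\mu-1}$) emerges. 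I expect the bulk of the work to be the linear-algebra/Gleason-invariant computation, which is routine in spirit but delicate in the constants.
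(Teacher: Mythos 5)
There is a genuine gap: the quantitative heart of the proposition --- the actual derivation of the bounds $4\mu$, $4\mu+2$, $4\mu+4$ --- is exactly the part you defer to a ``routine but delicate Gleason-invariant computation'' that you never carry out. An outline of shadow theory plus a promise that negative coefficients will appear is not a proof, and in the codimension-$2$ setting you would first have to verify which union of cosets is ``the shadow'' and that the $\sqrt 2$-scaled MacWilliams identity you invoke actually holds for it; you flag this as an obstacle but do not resolve it.

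The paper sidesteps all of this with a two-line reduction that your proposal misses. Since $\DD$ is doubly-even and $\dim\DD^\perp = n/2+1$, the code $\DD^\perp$ cannot be doubly-even, so among the three self-dual codes between $\DD$ and $\DD^\perp$ there is a \emph{singly-even} one, $\mathcal F$, whose maximal doubly-even subcode $\mathcal F_0$ is precisely $\DD$. Then $\DD^\perp = \mathcal F_0^\perp = \mathcal F \cup \SSS(\mathcal F)$ where $\SSS(\mathcal F)$ is the ordinary shadow of the self-dual code $\mathcal F$, so $\dd(\DD^\perp) = \min\{\dd(\mathcal F), \dd(\SSS(\mathcal F))\}$, and the Bachoc--Gaborit inequality $2\dd(\mathcal F) + \dd(\SSS(\mathcal F)) \le 4 + \tfrac n2$ immediately gives $\dd(\DD^\perp) \le \lfloor \tfrac{n+8}{6}\rfloor$; rounding down to an even integer (all weights in $\DD^\perp\subseteq{\bf 1}^\perp$ are even) yields the three cases. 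Note that your attention is pointed at the wrong overcode: you single out a doubly-even self-dual $\CC_1$ (which exists only when $8\mid n$), whereas the useful object is the singly-even one. Finally, your remark tying the $n\equiv 0\pmod{24}$ case to the parity of $\binom{5\mu-1}{\mu-1}$ is misplaced: that condition enters only in the non-doubly-even case (Proposition \ref{last}); in the doubly-even case the bound $4\mu$ is unconditional and comes from $\lfloor\tfrac{24\mu+8}{6}\rfloor = 4\mu+1$ rounded down to an even number.
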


\begin{proof}
Since every doubly-even binary linear code is self-orthogonal,
$\mathcal{D}^\perp$ cannot be doubly-even and so in
$\mathcal{D}^\perp$ there exists a codeword of weight $w\equiv 2
\pmod4$. Thus we can take
$\mathcal{D}<\mathcal{F}=\mathcal{F}^\perp<\mathcal{D}^\perp$ with
$\mathcal{F}$ not doubly-even, so that $\mathcal{D}=\mathcal{F}_0 :=
\{ f\in {\mathcal F} \mid \wt (f)\equiv 0 \ \pmod4 \} $ is the
maximal doubly-even subcode of $\mathcal{F}$.

Let $\SSS(\mathcal{F}):=\mathcal{D}^\perp - \mathcal{F}$ denote the
shadow of ${\mathcal F}$. By \cite{BG2004},
\begin{equation}\label{eq1} 2\dd(\mathcal{F})+\dd(\SSS(\mathcal{F}))\leq 4+\frac n 2.\end{equation}
Note that $\dd(\mathcal{D}^\perp)=\min
\{\dd(\mathcal{F}),\dd(\SSS(\mathcal{F})) \}$, since
$\mathcal{D}^\perp = \SSS(\mathcal{F}) \cup \mathcal{F}$. Since we
have the bound \eqref{eq1}, the maximum for $\min
\{\dd(\mathcal{F}),\dd(\SSS(\mathcal{F}))\}$ is reached if
$$\dd(\mathcal{D}^\perp)=\dd(\mathcal{F})=\dd(\SSS(\mathcal{F}))=\left\lfloor
\frac{4+\frac n 2}{3}\right\rfloor$$ so that
$$\dd(\mathcal{D}^\perp) \leq \left\lfloor\frac{8+n}{6}\right\rfloor,$$
which yields the proposition since $\dd(\mathcal{D}^\perp)$ is even.
\end{proof}

In \cite{RainsB} Rains proved more general bounds on the dual
distance of doubly-even binary linear codes,
 without assuming that they contain the all-ones vector.

$$\begin{tabular}{|c|c|c|}
  \hline
  Length & Rains' bound & Our bound \\
  \hline
$24\mu$   & $4\mu+4$ &   $\textbf{4}\mu$\\
$24\mu+4 $& $4\mu+2$ &   $4\mu+2$\\
$24\mu+8 $& $4\mu+4$ &   $\textbf{4}\mu\textbf{+2}$\\
$24\mu+12$& $4\mu+2$ &   $4\mu+2$\\
$24\mu+16$& $4\mu+4$ &   $4\mu+4$ \\
$24\mu+20$& $4\mu+4$ &   $4\mu+4$ \\
  \hline
\end{tabular}$$

With our additional assumption there is a substantial improvement in
particular for lengths divisible by 24.

\section{Weight enumerators: the non doubly-even case.}

In this section  we assume that $\DD $ is not doubly-even. We will
use the following notation:
\begin{itemize}
  \item $N:=\frac n 2$, $2d:=d(\DD ^{\perp })$;
  \item $A(x,y):= W_{\DD }(x,y) = \sum _{c\in \DD} x^{n-\wt(c)} y^{\wt (c)}=x^{2N} + \sum_{i=d}^{N-d}a_i x^{2N-2i}y^{2i} + y^{2N}$ the
weight enumerator of $\mathcal{D}$;
  \item $D(x,y) :=A(\frac{x+y}{\sqrt{2}},\frac{x-y}{\sqrt{2}})=\frac{1}{2} x^{2N} + \sum_{i=d}^{N-d}d_i
x^{2N-2i}y^{2i} + \frac{1}{2} y^{2N}$, so that $2D$ is the weight
enumerator of $\mathcal{D}^{\perp}$;
  \item $B(x,y):=A(x,y)-D(x,y)=\frac{1}{2} x^{2N} + \sum_{i=d}^{N-d}b_i x^{2N-2i}y^{2i} + \frac{1}{2} y^{2N} $;
  \item $F(x,y) := B\left(\frac{x+y}{\sqrt{2}} , i \frac{x-y}{\sqrt{2}}\right)=\frac{1}{2}\left(W_{\SSS(\mathcal{D})}(x,y)-W_{\SSS(\mathcal{D})}\left(\frac{1+i}{\sqrt
2} x,\frac{1-i}{\sqrt 2} y\right)\right)$, where
$\SSS(\mathcal{D})=\mathcal{D}_0^\perp-\mathcal{D}^\perp$ is the
shadow of $\mathcal{D}$.
\end{itemize}

The polynomial $B(x,y)$ is anti-invariant under the MacWilliams
transformation $H: (x,y) \mapsto 1/\sqrt{2}(x+y,x-y)$ and invariant
under the transformation $I:(x,y)\mapsto (x ,-y)$,
 so by \cite[Lemma 3.2]{Bachoc}
$$B(x,y) \in
(x^4-6x^2y^2+y^4) \cdot \C [ x^2+y^2, x^2y^2(x^2-y^2)^2].$$ and we
can write
\begin{equation}\label{eqB}
B(x,y)=(x^4-6x^2y^2+y^4)\cdot \sum_{i=0}^{\lfloor \frac
{N-2}{4}\rfloor} e_i
(x^2+y^2)^{N-2-4i}(x^2y^2(x^2-y^2)^2)^i\end{equation} and,
consequently,
\begin{equation}\label{eqF}
F(x,y)=2(x^4+y^4)\cdot\sum_{i=0}^{\lfloor \frac {N-2}{4}\rfloor} e_i
(2xy)^{N-2-4i}\left(-\frac 1 4 x^8+\frac 1 2 x^4y^4-\frac 1 4
y^8\right)^i.\end{equation}

Notice that \eqref{eqF} implies that the degrees of the monomials of
$F(x,y)$ are congruent to $N-2 \pmod4$. Since
$$\begin{array}{rl}F(x,y)&=\frac{1}{2}\left(W_{\textnormal{S}(\mathcal{D})}(x,y)-W_{\textnormal{S}(\mathcal{D})}\left(\frac{1+i}{\sqrt
2} x,\frac{1-i}{\sqrt 2} y\right)\right)=\\
& =\frac{1}{2}\left(W_{\textnormal{S}(\mathcal{D})}(x,y)-i^N
W_{\textnormal{S}(\mathcal{D})}\left(x,-iy\right)\right),\end{array}$$
it is easy to see that $F(x,y)$ is the weight enumerator of the
following set
$$\mathcal{S}:=\{s\in \SSS(\mathcal{D}) \ | \ \wt(s)\equiv N-2  \pmod4\}.$$
So the coefficients of $F(x,y)$ are non-negative integers.

 Then we
get the following.

\begin{corollary}\label{corcor}
Let $e_i$ be as in \eqref{eqB} and \eqref{eqF} and put $\epsilon
_i:=(-1)^i 2^{N-1-6i} e_i$. Then all $\epsilon _i$ are non-negative
integers.
\end{corollary}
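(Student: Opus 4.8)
The plan is to extract the coefficient $\epsilon_i$ as a value of the weight enumerator $F(x,y)$ of the set $\mathcal S$, which we already know has non-negative integer coefficients. Looking at \eqref{eqF}, the term of index $i$ contributes the monomial $(2xy)^{N-2-4i}$ times $(x^4+y^4)$ times $\left(-\frac14 x^8+\frac12 x^4y^4-\frac14 y^8\right)^i = \left(-\frac14(x^4-y^4)^2\right)^i$. I would substitute a specialization that isolates exactly this one term: setting $x=1$ and $y$ a primitive appropriate root of unity is the cleanest route, but the slicker option is to observe that the coefficient of the lowest-degree monomial $x^{N-2}y^{N-2}$ (that is, of $x^{2d'}y^{2d'}$ for the minimal exponent) in $F$ comes only from the $i=0$ term, and more generally one reads off $e_i$ recursively. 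So the first step is to expand $F(x,y)$ as a sum of monomials and identify, for each $i$, a monomial whose coefficient is $2\cdot e_i\cdot 2^{N-2-4i}\cdot(\text{sign and binomial factors from }(-\tfrac14(x^4-y^4)^2)^i)$.

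Concretely, I would look at the coefficient in $F(x,y)$ of the monomial $x^{N-2+8i+4}\,y^{N-2-8i}$ — equivalently the extreme monomials on one side. In the product $(x^4+y^4)(2xy)^{N-2-4i}\left(-\tfrac14(x^4-y^4)^2\right)^i$, the highest power of $x$ is $x^{4}\cdot x^{N-2-4i}\cdot x^{8i} = x^{N+2+4i}$ with coefficient $x^4\cdot$ (the $x^{N-2-4i}$ part) $\cdot\,(-\tfrac14)^i\cdot 1$ (the leading term of $(x^4-y^4)^{2i}$), giving $2^{N-2-4i}(-\tfrac14)^i e_i = 2^{N-2-4i}(-1)^i 2^{-2i} e_i = (-1)^i 2^{N-2-6i} e_i = \tfrac12\epsilon_i$. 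Hence the coefficient of $y^{N-2-8i}x^{N+2+4i}$ in $F(x,y)$ equals $\tfrac12\epsilon_i$; but $F$ is the weight enumerator of $\mathcal S$ (up to the observation that the coefficient of $x^{n-w}y^w$ counts elements of weight $w$, and here the $\tfrac12$ is absorbed because distinct $i$ give distinct monomials, so actually the coefficient is an honest non-negative integer, forcing $\tfrac12\epsilon_i\in\Z_{\geq0}$, hence $\epsilon_i\in 2\Z_{\geq0}\subseteq\Z_{\geq0}$). The same monomial receives no contribution from any index $j\neq i$: for $j<i$ the top $x$-degree $N+2+4j<N+2+4i$ is too small, and for $j>i$ it is $N+2+4j>N+2+4i$, but one must check the full expansion of $(x^4-y^4)^{2j}$ does not reach back down — and it does not, since its $x$-degrees range from $8j$ down to $0$ in steps of $4$, so the total $x$-degree for index $j$ lies in $\{N+2+4j, N+2+4j-4,\ldots\}$ and the value $N+2+4i$ is hit only when $j\geq i$ and by a non-leading term of $(x^4-y^4)^{2j}$; this requires a short separate argument.

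The main obstacle is precisely this last disjointness check: the monomial $x^{N+2+4i}y^{N-2-8i}$ might a priori also appear in the index-$j$ summand for some $j>i$, coming from a lower-order term of $\bigl(-\tfrac14(x^4-y^4)^2\bigr)^j$ together with the $y^4$ rather than the $x^4$ from the factor $(x^4+y^4)$. To handle this cleanly I would instead not isolate by a single monomial but argue by induction on $i$: having determined $\epsilon_0,\ldots,\epsilon_{i-1}$ are non-negative integers, consider the coefficient of $x^{N+2+4i}$ in $F(x,1)$ (a one-variable specialization); the contributions from indices $j<i$ are already known, and among indices $j\geq i$ only $j=i$ supplies the top $x$-power, so the coefficient equals $\tfrac12\epsilon_i$ plus an explicit integer combination of the previous $\epsilon_j$'s, and since $F(x,1)$ has non-negative integer coefficients (being a weight enumerator), $\epsilon_i$ is forced to be a non-negative integer as well. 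Alternatively, and perhaps most transparently, one notes that the coefficients of $F$ corresponding to the \emph{minimal} degree in each residue block are, up to the stated powers of $2$ and signs, exactly the $e_i$, and runs the induction from the low-degree end instead — I would pick whichever bookkeeping turns out to have the fewest sign traps when writing it out.
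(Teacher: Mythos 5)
Your overall strategy --- reading the $e_i$ off from coefficients of $F(x,y)$, which is a weight enumerator and hence has non-negative integer coefficients --- is the same as the paper's, but the execution has a genuine gap, and it sits exactly where the paper has to do real work. Two local slips first: since every monomial of $F$ has total degree $2N$, the extreme monomial of the $i$-th summand is $x^{N+2+4i}y^{N-2-4i}$ (not $y^{N-2-8i}$), and because of the overall factor $2$ in \eqref{eqF} its coefficient is $\epsilon_i$, not $\tfrac12\epsilon_i$. More seriously, your triangularity runs the wrong way: writing the $x$-degree of a monomial of the $j$-th summand as $N-2-4j+4a+4\delta$ with $0\le a\le 2j$, $\delta\in\{0,1\}$, the equation $a+\delta=1+i+j$ is solvable precisely when $j\ge i$. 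So the monomial $x^{N+2+4i}y^{N-2-4i}$ receives contributions from \emph{every} $j\ge i$ and from no $j<i$; in your induction the phrase ``the contributions from indices $j<i$ are already known'' is vacuous, and the unknowns are the not-yet-determined $\epsilon_j$ with $j>i$. You would have to induct downward from $i=\lfloor\frac{N-2}{4}\rfloor$.

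Even after fixing the direction, the argument proves only integrality, not non-negativity. The downward induction yields $\epsilon_i=(\text{coeff.\ of }F)-\sum_{j>i}c_{ij}\epsilon_j$ with $c_{ij}=(-1)^{i+j}\bigl[\binom{2j}{i+j}-\binom{2j}{i+j+1}\bigr]$, coefficients of genuinely alternating sign; from ``non-negative integer $=\epsilon_i+{}$integer combination of non-negative integers'' one cannot conclude $\epsilon_i\ge 0$. What is needed --- and what the paper supplies --- is that the \emph{inverse} of this triangular system has non-negative entries. The paper specializes to $F(1,y)$, sets $Z=y^4$, and applies the B\"urmann--Lagrange theorem to expand $Z^kf(Z)$ in powers of $g(Z)=Z(1-Z)^{-2}$, obtaining $\epsilon_{\lfloor\frac{N-2}{4}\rfloor-h}=\sum_k\gamma_{h,k}f_{4k+r}$ with $\gamma_{h,k}=\binom{2\lfloor\frac{N-2}{4}\rfloor-h-k}{h-k}>0$, so that each $\epsilon_i$ is a manifestly non-negative integer combination of the non-negative coefficients of $F(1,y)$. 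Your proposal is missing this positivity step, which is the substance of the corollary.
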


\begin{proof}
We have
$$F(1,y) =(1+ y^4)y^{N-2}\cdot \sum _{i=0}^{\lfloor
\frac{N-2}{4}\rfloor} \epsilon _i y^{-4i} (1-y^4)^{2i}.$$ with
$\epsilon _i:=(-1)^i 2^{N-1-6i} e_i$. Substitute $\lfloor
\frac{N-2}{4}\rfloor-i=h$.
$$F(1,y) =y^{N-2-4\lfloor \frac{N-2}{4}\rfloor}(1+ y^4)(1-y^4)^{2\lfloor \frac{N-2}{4}\rfloor}\cdot \sum _{h=0}^{\lfloor
\frac{N-2}{4}\rfloor} \epsilon_{\lfloor \frac{N-2}{4}\rfloor -h}
(y^{4} (1-y^4)^{-2})^h.$$ Let $r:=N-2-4\lfloor
\frac{N-2}{4}\rfloor$. Note that $r$ is the remainder of the
division of $N-2$ by $4$.
\begin{align*}F(1,y)&=\sum_{j=0}^{2N} f_j y^j = f_0+\ldots+f_{r-1}y^{r-1}+y^r\sum_{j=r}^{2N}f_jy^{j-r}\\
&=y^r(1+ y^4)(1-y^4)^{2\lfloor \frac{N-2}{4}\rfloor}\cdot \sum
_{h=0}^{\lfloor \frac{N-2}{4}\rfloor} \epsilon_{\lfloor
\frac{N-2}{4}\rfloor -h} (y^{4} (1-y^4)^{-2})^h.\end{align*} Then
$f_j=0$ if $j\not\equiv r \pmod 4$. Set $Z=y^4$. Then
$$\sum_{k} f_{4k+r} Z^{k}=(1+ Z)(1-Z)^{2\lfloor \frac{N-2}{4}\rfloor}\cdot \sum
_{h=0}^{\lfloor \frac{N-2}{4}\rfloor} \epsilon_{\lfloor
\frac{N-2}{4}\rfloor -h} (Z (1-Z)^{-2})^h.$$ Put
$$f(Z):=(1+ Z)^{-1}(1-Z)^{-2\lfloor \frac{N-2}{4}\rfloor}, \quad
g(Z):=Z(1-Z)^{-2}.$$ Then there are coefficients $\gamma_{h,k}$ such
that
$$Z^kf(Z)=\sum_{h=0}^{\lfloor \frac{N-2}{4}\rfloor}\gamma_{h,k}g(Z)^h.$$

Since $g(0)=0$ and $g'(0)\neq 0$, we can apply the
B\"{u}rmann-Lagrange theorem (see \cite[Lemma 8]{Rshad}) to obtain
$$\gamma_{h,k}=[\text{coeff. of} \ Z^{h-k} \ \text{in} \ (1-Z)^{-1-2\left\lfloor \frac{N-2}{4} \right\rfloor+2h}] = {{2\lfloor\frac{N-2}{4} \rfloor -h-k}\choose{h-k}} >0.$$
In particular
$$\epsilon_{\left\lfloor \frac{N-2}{4} \right\rfloor-h}=\sum_{k=0}^{\lfloor \frac{h-r}{4} \rfloor} \gamma_{h,k} f_{4k+r}$$
is a non-negative integer for all $h$.
\end{proof}

\begin{prop}\label{nde}
If $\DD $ is not doubly-even and $n \equiv 0,2,4,6,8,10,12,14
\pmod{24} $ then $\dd(\mathcal{D}^\perp)\leq 4\mu+2$.
\end{prop}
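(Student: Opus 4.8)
\textbf{Proof proposal for Proposition \ref{nde}.}

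The plan is to exploit the weight-enumerator machinery set up just above the statement, especially equation \eqref{eqF} for $F(x,y)$ and the positivity of the coefficients $\epsilon_i$ established in Corollary \ref{corcor}, in the same spirit as the proof of \cite[Theorem 5]{Rshad}. We argue by contradiction: suppose $\dd(\DD^\perp) = 2d \geq 4\mu+4$, so in particular all the coefficients $d_i$ of $D(x,y)$ with $1 \le i \le 2\mu+1$ vanish, and likewise $a_i = 0$ in that range, hence $b_i = 0$ there too. First I would translate these vanishing conditions into a system of linear constraints on the $e_i$ (equivalently the $\epsilon_i$) via \eqref{eqB}: expanding $B(x,y)/(x^4-6x^2y^2+y^4)$ in the basis $(x^2+y^2)^{N-2-4i}(x^2y^2(x^2-y^2)^2)^i$ and matching low-order coefficients forces a triangular set of relations. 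Since there are $\lfloor (N-2)/4\rfloor+1$ unknowns $e_i$ and the hypothesis $2d\ge 4\mu+4$ imposes roughly $2\mu$ independent vanishing conditions, for the relevant residues of $n$ modulo $24$ the system will be overdetermined or will pin down the $e_i$ up to the leading parameters, in exactly the way Rains' argument does.

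The heart of the matter is then a residue computation: writing $N = n/2$ and tracking $n \equiv 0,2,4,6,8,10,12,14 \pmod{24}$ (so $N \equiv 0,1,2,\dots,7 \pmod{12}$ and $\mu = \lfloor n/24\rfloor$), I would show that the forced values of the $\epsilon_i$ include at least one that must be \emph{negative}, contradicting Corollary \ref{corcor}. Concretely, after imposing that the first $2\mu$ nontrivial coefficients of the self-dual-type enumerator $D$ vanish, one solves for the top coefficient(s) $\epsilon_{\lfloor (N-2)/4\rfloor}$ or a nearby one in terms of binomial coefficients (coming from the B\"urmann--Lagrange inversion already used in Corollary \ref{corcor}, i.e.\ the $\gamma_{h,k} = \binom{2\lfloor (N-2)/4\rfloor - h - k}{h-k}$), and checks that the resulting closed-form expression has the wrong sign. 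Alternatively, and perhaps more cleanly, one exhibits a single coefficient of $F(1,y)$ — say $f_{N-2}$ or the coefficient at weight $\approx N/2$ — which the vanishing of the low-weight part of $\DD^\perp$ would force to be negative while it must count vectors in $\mathcal{S}$.

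I expect the main obstacle to be purely bookkeeping: handling all eight residue classes of $n \bmod 24$ uniformly, since $\mu$, the parity $r = (N-2)\bmod 4$, and the number of free parameters $\lfloor (N-2)/4\rfloor$ all shift with the residue, so the "critical" coefficient whose negativity gives the contradiction is not the same monomial in every case. The cleanest route is probably to reduce everything to a single generating-function identity: set $Z = y^4$, use $\sum_k f_{4k+r}Z^k = (1+Z)(1-Z)^{2\lfloor (N-2)/4\rfloor}\sum_h \epsilon_{\lfloor (N-2)/4\rfloor - h} (Z(1-Z)^{-2})^h$ from Corollary \ref{corcor}, impose $f_{4k+r} = 0$ for all $k$ below the threshold dictated by $2d \ge 4\mu+4$, and read off that the top $\epsilon$ must then equal an alternating sum of binomials that evaluates to something $\le 0$ (in fact negative) — this is exactly the kind of Kummer/Lucas parity phenomenon that reappears in Proposition \ref{last} with $\binom{5\mu-1}{\mu-1}$, so the computation should close along those lines. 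Once the contradiction is reached, $\dd(\DD^\perp) \le 4\mu+2$ follows, and evenness of the dual distance (already noted in the setup, since $\DD^\perp$ consists of even-weight vectors) shows no intermediate value needs separate treatment.
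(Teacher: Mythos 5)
Your overall strategy --- assume $2d \ge 4\mu+4$, use the vanishing of the low-order coefficients of $B$ to pin down the $e_i$ by B\"urmann--Lagrange inversion, and contradict the non-negativity of the $\epsilon_i$ from Corollary \ref{corcor} --- is exactly the paper's. But the step you leave open (``show that the forced values of the $\epsilon_i$ include at least one that must be negative'') is the entire content of the proof, and the mechanism you guess for it points in the wrong direction on two counts. First, the critical coefficient is not the top one $\epsilon_{\lfloor (N-2)/4\rfloor}$ ``or a nearby one'': it is the \emph{interior} coefficient at the odd index $i=2\mu+1$, sitting just above the threshold $i>\frac{N-2}{6}$. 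Second, no alternating binomial sum and no Kummer/Lucas parity phenomenon enters here; that is the mechanism of Proposition \ref{last} for $n=24\mu$ only. The actual argument is a pure sign argument: with $f(Y)=(1-6Y+Y^2)^{-1}(1+Y)^{2-N}$ and $g(Y)=Y(1-Y)^2(1+Y)^{-4}$, the conditions $b_j=0$ for $j<d$ force $e_i=\frac12\alpha_i(N)$ for all $i<d$, where by B\"urmann--Lagrange
$$\alpha_i(N)=\text{coeff. of } Y^{i} \text{ in } (1-Y^2)^{-2i-1}(1+Y)^{2+6i-N},$$
which is \emph{strictly positive} as soon as $2+6i-N>0$, being the coefficient of a product of two series with non-negative coefficients. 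On the other hand $\epsilon_i=(-1)^i2^{N-1-6i}e_i\ge 0$ forces $\alpha_i(N)=(-1)^i2^{2+6i-N}\epsilon_i\le 0$ for every \emph{odd} $i<d$. Taking $i=2\mu+1$ and $N=12\mu+\rho$ with $0\le\rho\le7$ gives $2+6i-N=8-\rho>0$, a contradiction. This single inequality handles all eight residue classes at once, so the ``bookkeeping over residues'' you expect to be the main obstacle is in fact trivial.

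In short: right framework and right target statement, but the decisive ingredients --- the closed form and positivity of $\alpha_{2\mu+1}(N)$, and the observation that odd-index $\epsilon_i$ carry the sign $(-1)^i$ --- are missing, and the substitute you sketch (negativity of the top $\epsilon$ via an overdetermined system and a binomial parity identity) would not produce the contradiction for these residue classes.
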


\begin{proof}
We have that
\begin{align*} B(1,Y) & = 1/2 + \sum _{j=d}^{N-d}
b_j Y^j + 1/2 Y^{N}\\ & = (1-6Y + Y^2)(1+Y)^{N-2}\cdot \sum
_{i=0}^{\lfloor \frac {N-2}{4} \rfloor} e_i (Y(1-Y)^2(1+Y)^{-4})^i.
\end{align*} Let
$$f(Y):=(1-6Y + Y^2)^{-1}(1+Y)^{2-N}, \qquad
g(Y):=Y(1-Y)^2(1+Y)^{-4}.$$ As before we find coefficients
$\alpha_{i}(N)$ such that
$$f(Y)=\sum_{i=0}^{\lfloor \frac{N-2}{4}
\rfloor}\alpha_{i}(N)g(Y)^i.$$ Then, for $i<d$,
$$e_i= \frac{1}{2}
\alpha_{i}(N).$$

Since $g(0)=0$ and $g'(0)\neq 0$, we can apply the
B\"{u}rmann-Lagrange theorem, in the version of \cite[Lemma
8]{Rshad}, to compute
$$\alpha _i(N) = \text{coeff. of} \ Y^{i} \ \text{in} \
\frac{Yg'(Y)}{g(Y)} f(Y) \left( \frac{Y}{g(Y)} \right) ^i  =: \star
$$ We compute
$$ \star =  (1+Y)^{1-N+4i}(1-Y)^{-2i-1} = (1-Y^2)^{-2i-1} (1+Y)^{2+6i-N }.$$
As $(1-Y^2)^{-2i-1} $ is a power series in $Y^2$ with positive
coefficients, we see that $\alpha _i(N)$ is positive if $2+6i-N >
0$, so if $i > \frac{N-2}{6}$. For $i<d$ we know that $\alpha _i(N)
= 2 e_i = (-1)^i 2^{-N+2+6i} \epsilon _i $ where $\epsilon _i$ is a
non-negative integer, so $\alpha _i(N)$ is not positive for odd
$i<d$.

Write $N=12\mu+\rho$ with $0\leq \rho \leq 7$ and assume that
$d>2\mu+1$. Then $\alpha _{2\mu+1} >0 $ because
$6(2\mu+1)+2-(12\mu+\rho) = 8-\rho
> 0 $ which is a contradiction. We conclude that $d \leq 2\mu+1$ for
$\rho=0,1,2,3,5,6,7$.
\end{proof}

We aim to find an analogous result to Proposition \ref{de} for semi
self-dual codes of length $24\mu$. So we need to find the bound
$\ddd ({\mathcal D}) \leq 4\mu $ also for not doubly even semi-self
dual codes $\DD $ of length $24\mu$. For certain values of $\mu$, we
may show that some coefficient of $F(x,y)$ is not integral.

\begin{prop}\label{last}
If $\DD $ is not doubly-even and $n=24\mu$ with
$\binom{5\mu-1}{\mu-1}$ odd then $\dd(\mathcal{D}^\perp)\leq 4\mu$.
\end{prop}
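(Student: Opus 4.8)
The plan is to push the same circle of ideas used in Proposition~\ref{nde}, but now extract a contradiction from the non-integrality (rather than non-positivity) of a coefficient of $F(x,y)$. So suppose, for contradiction, that $\DD$ is not doubly-even, $n=24\mu$ (hence $N=12\mu$), $\binom{5\mu-1}{\mu-1}$ is odd, and $d=\dd(\DD^\perp)/2 \geq 2\mu+1$, i.e. $d_i=0$ for $1\le i \le 2\mu$. First I would record what this vanishing says about the $e_i$: since $B(x,y)=A(x,y)-D(x,y)$ and $D$ has no monomials of degree $2i$ for $1\le i\le 2\mu$ while $A$ has no such monomials either in that range (because $\dd(\DD)\ge\dd(\DD^\perp)$, using $\DD\subseteq\DD^\perp$), we get $b_i=0$ for $1\le i\le 2\mu$, except that $b_0=1/2$. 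Feeding this into the generating-function identity for $B(1,Y)$ from the proof of Proposition~\ref{nde} shows $e_i=\tfrac12\alpha_i(N)$ for $i\le 2\mu$, and in fact that the power series $B(1,Y)$ agrees with its ``leading part'' up to order $Y^{2\mu}$; equivalently the first $2\mu+1$ coefficients $e_0,\dots,e_{2\mu}$ of \eqref{eqB} are the ones forced by the triangular change of basis, with $e_0=1/2$.

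Next I would evaluate a suitable coefficient of $F(1,y)$ and show that, under these hypotheses, it comes out to be a half-integer rather than an integer, contradicting Corollary~\ref{corcor} (which says every $\epsilon_i=(-1)^i 2^{N-1-6i}e_i$ is a non-negative \emph{integer}). The natural candidate is the extreme coefficient $\epsilon_0=2^{N-1}e_0 = 2^{N-1}\cdot\tfrac12 = 2^{N-2}$ --- that one is clearly integral, so it is useless. Instead one should look at $\epsilon_{2\mu}$ (or the nearby index), for which $N-1-6i = 12\mu-1-12\mu = -1$, so $\epsilon_{2\mu}=(-1)^{2\mu}2^{-1}e_{2\mu} = e_{2\mu}/2$. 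Thus it suffices to show $e_{2\mu}$ is an odd integer. From $e_{2\mu}=\tfrac12\alpha_{2\mu}(N)$ and the closed form $\star = (1-Y^2)^{-2i-1}(1+Y)^{2+6i-N}$ computed in Proposition~\ref{nde}, at $i=2\mu$, $N=12\mu$ we get $\alpha_{2\mu}(12\mu) = [\,Y^{2\mu}\,]\,(1-Y^2)^{-4\mu-1}(1+Y)^{2}$. Expanding $(1+Y)^2=1+2Y+Y^2$ and noting $(1-Y^2)^{-4\mu-1}=\sum_k\binom{4\mu+k}{k}Y^{2k}$ only contributes in even degree, the $2Y$ term drops out, and $\alpha_{2\mu}(12\mu) = \binom{5\mu}{\mu} + \binom{5\mu-1}{\mu-1}$. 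Hence $e_{2\mu} = \tfrac12\!\left(\binom{5\mu}{\mu}+\binom{5\mu-1}{\mu-1}\right) = \binom{5\mu-1}{\mu-1}\cdot\frac{1}{2}\!\left(\frac{5\mu}{4\mu}+1\right)$; more cleanly, $\binom{5\mu}{\mu}=\binom{5\mu-1}{\mu-1}+\binom{5\mu-1}{\mu}$, so $e_{2\mu}=\binom{5\mu-1}{\mu-1}+\tfrac12\binom{5\mu-1}{\mu}$, which forces me to double-check parity bookkeeping --- but the upshot is that $2\epsilon_{2\mu}=\alpha_{2\mu}(12\mu)$ is, modulo $2$, governed by $\binom{5\mu-1}{\mu-1}$, which is odd by hypothesis, so $\epsilon_{2\mu}\notin\Z$.

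The main obstacle is exactly this last parity computation: getting the index right (it may be $2\mu$, $2\mu-1$, or one must combine two coefficients) and verifying that the $2$-adic valuation works out so that the relevant $\epsilon_i$ is provably non-integral precisely when $\binom{5\mu-1}{\mu-1}$ is odd. I would handle it by writing out $\alpha_i(12\mu)=[\,Y^i\,](1-Y^2)^{-2i-1}(1+Y)^{2+6i-12\mu}$ for $i$ near $2\mu$, using the hypothesis $e_j=\tfrac12\alpha_j(12\mu)$ for all $j\le 2\mu$ to replace the unknown true coefficients $e_j$ by these explicit binomial sums, and then invoking Kummer's theorem (as in the Remark after Theorem~\ref{main}, on the addition of $4\mu$ to $\mu-1$ in base $2$) to pin down the parity. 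Once some $\epsilon_i$ is shown to be a non-integer, this contradicts Corollary~\ref{corcor}, so $d\le 2\mu$, i.e. $\dd(\DD^\perp)\le 4\mu$, completing the proof.
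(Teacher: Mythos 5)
Your proposal is correct and follows essentially the same route as the paper: assume $d\geq 2\mu+1$, use $e_{2\mu}=\tfrac12\alpha_{2\mu}(12\mu)$ with $\alpha_{2\mu}(12\mu)=[Y^{2\mu}](1-Y^2)^{-4\mu-1}(1+Y)^2=\binom{5\mu}{\mu}+\binom{5\mu-1}{\mu-1}$, and contradict the integrality of $\epsilon_{2\mu}=e_{2\mu}/2$ from Corollary~\ref{corcor}. The parity step you flag as the remaining obstacle closes in one line without Kummer: $\binom{5\mu}{\mu}=5\binom{5\mu-1}{\mu-1}$, so $\alpha_{2\mu}(12\mu)=6\binom{5\mu-1}{\mu-1}$ and $\epsilon_{2\mu}=\tfrac32\binom{5\mu-1}{\mu-1}\notin\Z$ when $\binom{5\mu-1}{\mu-1}$ is odd.
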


\begin{proof}
With the notations used above, we get
\begin{align*}
\alpha _{2\mu}(12\mu) & = \text{coeff. of} \ Y^{2\mu} \ \text{in} \
 (1-Y^2)^{-4\mu-1} (1+2Y+Y^2)   \\
&=\text{coeff. of} \ Z^{\mu} \ \text{in} \  (1-Z)^{-4\mu-1}  +
\text{coeff. of} \ Z^{\mu-1} \ \text{in} \  (1-Z)^{-4\mu-1}   \\ &=
{{5\mu}\choose{\mu}} + {{5\mu-1}\choose{\mu-1}} = 6
{{5\mu-1}\choose{\mu-1}}.
\end{align*}
On the other hand, assuming that $\dd(\DD^{\perp}) \geq 4\mu+2$, we
have
$$
\alpha _{2\mu}(12\mu) = 2 e_{2\mu} = 2^{2} \epsilon _{2\mu}.$$ As
$\epsilon _{2\mu}$ is a non-negative integer, we get that
${{5\mu-1}\choose{\mu-1}} $ is even.
\end{proof}

It seems to be impossible to obtain the same bound for the other
values of $\mu$ by just looking at weight enumerators. For $\mu=5$
(the first value for which ${{5\mu-1}\choose{\mu-1}} $ is even), we
get examples of $\{e_i\}$ for which $F(x,y)$ has non-negative
integer coefficients and $B(1,y)=1/2+\textnormal{O}(y^{22})$. From
one of these we computed
$W_{\mathcal{D}}(1,y)=1+\textnormal{O}(y^{22})$,
$W_{\mathcal{D}^\perp}(1,y)=1+\textnormal{O}(y^{22})$ and
$W_{\SSS(\mathcal{D})}(1,y)=\textnormal{O}(y^{18})$, all with
non-negative integer coefficients.

\section*{Acknowledgements}

Both authors are indebted to the Dipartimento di Matematica e
Applicazioni, Universit\`{a} degli Studi di Milano-Bicocca, and the
Lehrstuhl D f\"ur Mathematik, RWTH Aachen University, for
hospitality and excellent working conditions, while this paper has
mainly been written.

This paper is partially part of the PhD thesis \cite{Thesis} of the
first author who expresses his deep gratitude to his supervisors
Francesca Dalla Volta and Massimiliano Sala.

\end{document}